\documentclass{amsart}
\usepackage{amssymb}
\newtheorem{theorem}{Theorem}[section]
\newtheorem{lemma}[theorem]{Lemma}

\theoremstyle{definition}
\newtheorem*{definition}{Definition}

\newtheorem{remark}[theorem]{Remark}
\DeclareMathOperator{\rank}{rank}

\numberwithin{equation}{section}

\begin{document}

\def\bR{{\mathbf{R}}}
\def\bC{{\mathbf{C}}}
\def\bZ{{\mathbf{Z}}}
\def\rp2{{\mathbf{R}\mathrm{P}^2}}
\def\cp2{{\mathbf{C}\mathrm{P}^2}}
\def\ctm{{\mathbf{C}\otimes TM}}
\def\tri{totally real immersion~}
\def\bRtwoN{{\mathbf{R}^{2N}}}
\def\onejet{J^1(M,\mathbf{R}^{2N})}

\title[Optimality for totally real immersions and independent mappings]
{Optimality for totally real immersions and independent mappings of manifolds into $\bC^N$}

\author{Pak Tung Ho}
\address{Department of Mathematics, Sogang University, Seoul 121-742, Korea}
\email{ptho@sogang.ac.kr}

\author{Howard Jacobowitz}
\address{Department of Mathematical Sciences, Rutgers University, Camden, New Jersey, USA}
\email{jacobowi@camden.rutgers.edu}

\author {Peter Landweber}
\address{Department of Mathematics, Rutgers University, Piscataway, NJ 08854, USA}
\email{landwebe@math.rutgers.edu}

\subjclass[2000]{Primary 32V40; Secondary 32Q28, 57R42}

\date{March 13, 2012}

\keywords{totally real immersion, independent functions, Stein manifold}

\begin{abstract} The optimal target dimensions are determined for 
totally real immersions and for
independent mappings into complex affine spaces. Our arguments are similar to those given by Forster, but we use orientable manifolds as far as possible and so are able to obtain improved results for orientable manifolds of even dimension.  This leads to new examples showing that the known  immersion and submersion dimensions for holomorphic mappings from Stein manifolds to affine spaces are best possible. 
\end{abstract}

\maketitle

\section{Introduction}

The following two theorems are easily proved by counting dimensions and applying the Thom Transversality Theorem.  See \cite{lecturenotes} for a full discussion, and the appendix for a brief account. 

\begin{definition} 
A smooth immersion $f:M \to\bC^N$
is called \textit{totally real} if $f_*(TM)$ does not contain any
complex line. (All manifolds considered are assumed to be smooth and second countable.)  This is equivalent to requiring that
$$f_*(TM)\cap i(f_*(TM))=\{0\}.$$
\end{definition}

\begin{theorem}\label{tri}
There exists a totally real immersion of each $n$-dimensional manifold $M$ into $\bC^N$,
provided $N \geq [\frac{3n}{2}]$.
\end{theorem}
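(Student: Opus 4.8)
The plan is to express the condition of being a totally real immersion as the requirement that the $1$-jet $j^1 f$ avoid a certain closed subset $\Sigma\subset\onejet$, to compute the codimension of $\Sigma$, and to conclude with the Thom Transversality Theorem. Identifying $\bC^N$ with $\bRtwoN$, a smooth map $f\colon M\to\bC^N$ is a totally real immersion exactly when, for each $x\in M$, the real-linear map $df_x\colon T_xM\to\bC^N$ is injective with $df_x(T_xM)\cap i\,df_x(T_xM)=\{0\}$; equivalently, when the complex-linear extension $\widetilde{df_x}\colon\bC\otimes T_xM\to\bC^N$, given by $\widetilde{df_x}\bigl(v\otimes(a+bi)\bigr)=(a+bi)\,df_x(v)$, is injective. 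Indeed, the image of $\widetilde{df_x}$ is $df_x(T_xM)+i\,df_x(T_xM)$, which has real dimension $2n$ precisely when $df_x$ is injective and its image contains no complex line. Thus $f$ is a totally real immersion if and only if $j^1 f$ avoids
$$\Sigma=\bigl\{\,\sigma\in\onejet : \text{the complexification of the linear part of }\sigma\text{ is not injective}\,\bigr\},$$
a subset which depends only on the linear part of the jet and restricts to the same subset in every fiber of $\onejet\to M\times\bRtwoN$.

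The key step is the codimension count for $\Sigma$. The assignment $A\mapsto\widetilde A$ is a real-linear isomorphism from $\mathrm{Hom}_{\bR}(\bR^{n},\bC^N)$ onto $\mathrm{Hom}_{\bC}(\bC^{n},\bC^N)$: it is injective since $\widetilde A$ restricts to $A$ on $\bR^{n}$, and both spaces are real vector spaces of dimension $2Nn$. Under this isomorphism the fiber of $\Sigma$ corresponds to the set of complex $N\times n$ matrices of rank at most $n-1$; since $N\ge[\frac{3n}{2}]\ge n$, this is the classical determinantal variety, a closed algebraic set stratified by the smooth loci of matrices of each fixed rank $r\le n-1$, the rank-$r$ stratum having complex codimension $(N-r)(n-r)\ge N-n+1$. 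Hence $\Sigma$ is a finite union of smooth submanifolds of $\onejet$, each of real codimension at least $2(N-n+1)$.

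Now I would invoke transversality. The Thom Transversality Theorem produces a residual set of maps $f\in C^\infty(M,\bRtwoN)$ for which $j^1 f$ is transverse to every stratum of $\Sigma$; for such $f$, the set $(j^1 f)^{-1}(\Sigma)$ is a finite union of submanifolds of $M$ of codimension at least $2(N-n+1)$. But $N\ge[\frac{3n}{2}]$ forces $2(N-n+1)>n$, i.e.\ $2N>3n-2$ — an inequality one checks separately for $n$ even ($2N\ge3n$) and $n$ odd ($2N\ge3n-1$). Since $2(N-n+1)>n=\dim M$, this preimage must be empty, so a generic $f$ has $j^1 f$ disjoint from $\Sigma$ and is therefore a totally real immersion.

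I expect the codimension count to be the real obstacle. The subtle point is that the apparently ``real'' requirement that a real subspace contain no complex line is precisely the failure of injectivity of the complex-linear map $\widetilde{df_x}$; hence its bad locus is (the complement of) a \emph{complex} determinantal variety, of real codimension $2(N-n+1)$, rather than the roughly half-as-large codimension one would obtain by treating it as a real determinantal condition. That factor of two is exactly what yields the sharp bound $[\frac{3n}{2}]$. Everything else is a routine application of jet transversality; the only care needed is that $\Sigma$ is stratified rather than smooth, so transversality and the dimension estimate are applied one rank-stratum at a time.
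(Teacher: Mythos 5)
Your proof is correct and follows essentially the same route as the paper's appendix argument: identify the bad locus $\Sigma \subset \onejet$ defined by the failure of total reality, show it has real codimension $2(N-n+1)$, and invoke the Thom transversality theorem to get a residual set of maps avoiding $\Sigma$. The only cosmetic difference is that you obtain the codimension by transporting $\Sigma$ to the classical complex determinantal variety via the isomorphism $A \mapsto \widetilde A$ and citing the formula $(N-r)(n-r)$ for the rank-$r$ stratum, whereas the paper carries out the equivalent linear-algebra count directly with the real $2N\times 2n$ matrix $(A,JA)$; both give $2(N-n+1)>n$ exactly when $N \geq [\tfrac{3n}{2}]$.
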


\smallskip
\begin{definition}
A smooth mapping 
$F: M \to \bC^N$ is called \emph{independent} if its coordinate functions $F_1, \ldots, F_N$ satisfy 
$dF_1\wedge\cdots\wedge dF_N \neq 0$ at all points of $M,$ in which case the coordinate functions are also called \emph{independent.}
\end{definition}

\begin{theorem}\label{functions}
For every manifold $M$ of dimension $n$ there is a smooth
map $F:M\to \bC^N$ whose coordinate functions are independent, provided $N \leq [{\frac{n+1}{2}}]$.  
\end{theorem}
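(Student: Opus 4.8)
The plan is to run the jet-transversality argument outlined in the appendix, applied now to the first-order condition defining independence. Writing $F_j = u_j + i v_j$ and viewing $\bC^N = \bR^{2N}$, at a point $p \in M$ each differential $dF_j|_p$ lies in $\mathrm{Hom}_{\bR}(T_pM, \bC)$, which we identify with the complex dual $(\bC \otimes T_pM)^{*}$. The $N$-form $dF_1 \wedge \dots \wedge dF_N$ is nonzero at $p$ exactly when $dF_1|_p, \dots, dF_N|_p$ are linearly independent over $\bC$ in $(\bC \otimes T_pM)^{*}$; equivalently, the complexification $\bC \otimes T_pM \to \bC^N$ of the real differential has complex rank $N$. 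Thus $F$ is independent if and only if its $1$-jet $j^1 F \colon M \to \onejet$ avoids the subset $\Sigma \subset \onejet$ of jets whose linear part, after complexification of $T_pM$, has complex rank $\le N-1$.

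Next I would compute the codimension of $\Sigma$. Picking a real basis of $T_pM$, which is also a $\bC$-basis of $\bC \otimes T_pM$, identifies the linear part of a jet with an $N \times n$ complex matrix, and $\Sigma$ becomes fiberwise the determinantal variety of $N \times n$ complex matrices of rank $\le N-1$. Since $N \le n$ under our hypothesis, the standard count shows that the rank-$\le r$ locus has complex codimension $(N-r)(n-r)$; with $r = N-1$ this is $n - N + 1$, i.e.\ \emph{real} codimension $2(n - N + 1)$ in each fiber and hence in $\onejet$. The inequality $2(n-N+1) > n = \dim M$ is equivalent to $2N \le n+1$, that is, to $N \le [\frac{n+1}{2}]$ --- exactly the stated hypothesis.

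Finally I would invoke the Thom Transversality Theorem, in the form recalled in the appendix. The one subtlety, which I expect to be the main (if routine) obstacle, is that $\Sigma$ is not a submanifold of $\onejet$: it is singular along the loci where the linear part has rank $< N-1$, so transversality cannot be applied to $\Sigma$ as it stands. The remedy is to stratify $\Sigma$ by the rank $k$ of the linear part, $0 \le k \le N-1$; each stratum is a smooth sub-bundle of $\onejet$ (being invariant under the structure group), of real codimension $2(N-k)(n-k) \ge 2(n-N+1)$, which by the computation above exceeds $\dim M$. The theorem then yields a residual set of smooth maps $F \colon M \to \bC^N$ for which $j^1 F$ is transverse to every one of these finitely many submanifolds; since each has codimension greater than $\dim M$, transversality here means $j^1 F(M)$ is disjoint from it, whence $j^1 F(M) \cap \Sigma = \emptyset$ and $F$ is independent. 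As $M$ is second countable, the residual set is nonempty, completing the argument.
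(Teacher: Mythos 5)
Your proposal is correct and follows essentially the same route as the paper's appendix proof (\S 6.2): both pass to the $1$-jet bundle, identify $\Sigma$ as the locus where the complex $N\times n$ matrix of partials drops rank, compute its real codimension to be $2(n-N+1)$, and apply the Thom Transversality Theorem once that codimension exceeds $n$. Your explicit stratification of $\Sigma$ by rank, with each stratum of codimension $2(N-k)(n-k)$, is a welcome clarification of a point the paper handles more tersely by working ``at a generic point'' and citing the stratified-set framework of \cite{EM}, but it does not change the substance of the argument.
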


Theorems obtained using transversality, such as these, are often, but not always, optimal in the sense that the target dimension cannot be decreased (in a case such as Theorem \ref{tri}) or increased (in a case such as Theorem \ref{functions}).  Recall that a transversality argument implies that every $n$-dimensional manifold $M$ has an immersion into $\bR^{2n}$, but a more delicate argument due to Whitney decreases $2n$ to $2n-1$ for $n>1.$

\medskip
The aim of this paper is to prove the optimality of the theorems stated above, by constructing and examining suitable simple examples (of closed manifolds) in all positive dimensions.  The arguments, which are very similar to those due to Forster \cite{Fo2}, are presented in the next two sections.  In addition, in \S4 we prove slightly stronger (optimal) results for orientable manifolds having dimension of the form $4k+2,$ and also for orientable manifolds of dimension $4k$ under the assumption that the top Pontryagin class (or top dual Pontryagin class) vanishes.  

In the final section we compare our results to those for holomorphic immersions and submersions of Stein manifolds proved by Forster \cite{Fo2} and Forstneri\v{c} \cite{For1}.  See Chapter 8 of the recent book by Franc Forstneri\v{c} \cite{For2} for a full account of these results.  

The appendix outlines how to prove Theorems \ref {tri} and \ref {functions} using simple transversality arguments.


\section{Optimality for totally real immersions}
We shall show that the target dimensions in Theorem \ref{tri} cannot be decreased. 
This will be accomplished by finding manifolds $M^{2n}$ and $M^{2n+1}$ in all positive dimensions so that

\begin{itemize}

	\item 
	$M^{2n}$ does not admit a  \tri  into $\bC ^N$ for $N=3n-1$, and

	\item 
	$M^{2n+1}$ does not admit a  \tri  into $\bC ^N$ for $N=3n$.
	
\end{itemize}

We provide four families of examples according to the residue of the dimension of $M$ modulo $4$.  Let
\[
M^{4k} = \cp2 \times \cdots \times \cp2 = (\cp2)^{\times k}
\]
be the product of $k$ copies of the complex projective plane.

\begin{theorem}\ \label{M}
\begin{itemize}
	\item 
	$M^{4k} $ does not admit a \tri into $\bC ^N$ for $N=6k-1$.
	
	\item 
	$M^{4k+1}=M^{4k}\times S^1 $ does not admit a \tri into 
	$\bC ^N$ for $N=6k$.
	
	\item 
	$M^{4k+2}=M^{4k}\times \rp2 $ does not admit a \tri into 
	$\bC ^N$ for $N=6k+2$.
	
	\item 
	$M^{4k+3}=M^{4k}\times \rp2\times S^1 $ does not admit a \tri into 	$\bC ^N$ for $N=6k+3$.
	
\end{itemize}
\end{theorem}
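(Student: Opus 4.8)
The plan is to use the obstruction coming from the total Chern class, following the strategy of Forster. The key observation is that if $f : M^m \to \bC^N$ is a totally real immersion, then $\bC \otimes TM \cong f^*(T\bC^N) \oplus \nu$ for a suitable complex bundle; more usefully, the complexification $\bC \otimes TM$ is isomorphic (as a complex bundle) to $TM \oplus TM$ when $M$ carries an almost complex structure, but in general the relevant fact is that a totally real immersion into $\bC^N$ forces $\bC \otimes TM$ to be isomorphic to the pullback of the trivial bundle $\bC^N$ plus a complementary bundle of complex rank $N - m$; hence $\bC \otimes TM$ must admit $N-m$ linearly independent sections, equivalently its top Chern classes in degrees $> 2(N-m)$ must vanish... more precisely, one uses that $\bC\otimes TM$ embeds in a trivial $\bC^N$, so its dual (or complementary) bundle has rank $N-m$, which forces $c_j(\bC\otimes TM) = 0$ for appropriate $j$. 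So first I would record this obstruction: a totally real immersion of $M^m$ into $\bC^N$ implies that the stable bundle $-\bC\otimes TM$ (equivalently the complementary bundle) has a representative of complex rank $N-m$, and hence the Chern classes of $\bC\otimes TM$ vanish above a certain degree, or a Chern/Pontryagin number identity fails.

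Second, I would compute the relevant characteristic class of $\bC\otimes TM$ for $M = M^{4k}=(\cp2)^{\times k}$. Since $\bC\otimes TM \cong TM \oplus \overline{TM}$ as real bundles (complexification), its Chern classes are determined by the Chern classes of $T\cp2$, namely $c(T\cp2) = (1+a)^3$ with $a$ the generator of $H^2(\cp2)$; for the product one takes the external product, and $\bC \otimes T((\cp2)^{\times k})$ has total Chern class $\prod_{j=1}^k (1+a_j)^3(1-a_j)^3 = \prod_j (1-a_j^2)^3$. The top nonzero Chern class sits in degree $4k$, i.e.\ $c_{2k}(\bC\otimes TM) = \pm\,3^k\, a_1^2\cdots a_k^2 \neq 0$ (the sign and the factor $3^k$ being what matters — it is nonzero). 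Since $\dim_{\bR} M^{4k} = 4k$, this class is the top class and is nonzero; but a totally real immersion into $\bC^{6k-1}$ would give a complementary complex bundle of rank $(6k-1) - 4k = 2k-1$, forcing $c_{2k}(\bC\otimes TM)$ to vanish — contradiction. This handles the first bullet.

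Third, for the remaining three bullets I would multiply by the auxiliary factors $S^1$, $\rp2$, and $\rp2\times S^1$ and track how the obstruction propagates. Multiplying by $S^1$ adds $1$ to the dimension without changing cohomology in top complementary degree, so $M^{4k+1}$ behaves like $M^{4k}$: the nonzero class $c_{2k}$ still obstructs immersion into $\bC^{6k}$ (complementary rank $6k - (4k+1) = 2k-1$ again). For the factor $\rp2$ one must work mod $2$, using Stiefel–Whitney classes or the mod $2$ reduction of Chern classes: $T\rp2$ has $w(T\rp2) = (1+t)^3 = 1+t+t^2$ with $t\in H^1(\rp2;\bZ/2)$ nonzero and $t^2\neq 0$, and $\bC\otimes T\rp2$ has a nonvanishing class in degree $2$ mod $2$. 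Combining via the Whitney product formula, $\bC\otimes TM^{4k+2}$ has a nonzero class in degree $4k+2$ mod $2$ (the product $3^k a_1^2\cdots a_k^2 \otimes (\text{degree-2 class on }\rp2)$, all mod $2$), which obstructs a totally real immersion into $\bC^{6k+2}$ since the complementary rank would be $(6k+2)-(4k+2) = 2k < 2k+1 = \tfrac{1}{2}\dim M$. The case $M^{4k+3} = M^{4k}\times\rp2\times S^1$ is then immediate by the same $S^1$-stability argument applied to $M^{4k+2}$.

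The main obstacle will be getting the obstruction statement exactly right and consistent across the integral cases (bullets one, two) and the mod $2$ cases (bullets three, four): one needs the precise numerical relation between $N$, $\dim M$, and the degree of the surviving characteristic class, and one must be careful that in the $\rp2$ cases the relevant class genuinely survives mod $2$ (i.e.\ that no cancellation occurs in $(1+t)^3 = 1+t+t^2$ and that the cross product with the nonzero top class of $(\cp2)^{\times k}$, reduced mod $2$, is nonzero — here the coefficient $3^k$ is odd, which is exactly what saves it). I would also double-check that the $S^1$ factor truly contributes nothing to the obstruction degree, so that the bound $N = 6k$ (resp.\ $6k+3$) is the sharp one claimed.
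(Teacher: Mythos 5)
Your plan matches the paper's argument at every structural point: reduce to a statement about complex vector bundles, compute $c(\bC\otimes T\cp2)=(1-a^2)^3=1-3a^2$, tensor up by the K\"unneth formula, and use the fact that the coefficient $3^k$ is odd to make the $\rp2$ cases survive in $H^{4k+2}(M;\bZ)\cong\bZ_2$. The $S^1$ factors, as you say, cost a dimension but contribute nothing new cohomologically. So the route is the same as the paper's.

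However, there is a recurring logical slip in how you state the obstruction, and as written it is false. The correct fact (Lemma 2.2 of the paper) is that a totally real immersion $f:M^m\to\bC^N$ makes $\bC\otimes TM$ a complex \emph{subbundle} of the trivial rank-$N$ bundle, so that $(\bC\otimes TM)\oplus Q$ is trivial for some complex bundle $Q$ of rank $N-m$. You at times write this backwards (``$\bC\otimes TM$ is isomorphic to the pullback of the trivial bundle $\bC^N$ plus a complementary bundle''), and you repeatedly claim that the existence of a low-rank complement ``forces $c_{2k}(\bC\otimes TM)$ to vanish.'' That is not what follows. Having a rank-$r$ complement does \emph{not} constrain $c_j(\bC\otimes TM)$ directly; for example $\gamma\oplus\gamma^*$ is trivial over $\cp1$, yet $c_1(\gamma)\neq 0$. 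What the Whitney formula does force is that $c(Q)=c(\bC\otimes TM)^{-1}$ as a formal inverse, and the constraint is $c_j(Q)=0$ for $j>\rank Q=N-m$. The actual contradiction in the paper is therefore that $c(Q)=\prod(1+3a_i^2)$ (times $(1+b_1)$ in the $\rp2$ cases), whose top component $c_{2k}(Q)=3^k a_1^2\cdots a_k^2$ (resp.\ $c_{2k+1}(Q)=3^k a_1^2\cdots a_k^2 b_1$) is nonzero, forcing $\rank Q\geq 2k$ (resp.\ $\geq 2k+1$) and hence $N\geq 6k$ (resp.\ $N\geq 6k+3$). In these particular examples $c_{2k}(Q)$ and $c_{2k}(\bC\otimes TM)$ happen to agree up to sign, so your numbers come out right; but the reasoning as stated would not work in general and should be corrected. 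Two smaller points: $\bC\otimes TM\cong TM\oplus\overline{TM}$ (not $TM\oplus TM$) when $M$ is almost complex, and the ``$2k<2k+1=\tfrac12\dim M$'' phrasing for the $\rp2$ cases makes the obstruction sound like a pure dimension count --- it is not, it hinges on the specific nonvanishing class and on $3^k$ being odd, which you do note at the end.
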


We first reduce the proof to a statement about bundles.

\begin{lemma}\label{Q}
If a manifold $M$ has a totally real immersion into $\bC^N$, then there exists a complex vector bundle $Q$ over $M$ 
such that 
\[
(\bC \otimes TM)\oplus Q
\]
is trivial of rank $N$.
\end{lemma}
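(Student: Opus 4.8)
The plan is to start from a totally real immersion $f \colon M \to \bC^N$ and analyze the pullback of the tangent bundle of $\bC^N$. Since $f$ is an immersion, $f^*(T\bC^N)$ splits as $f_*(TM) \oplus \nu$ for a normal bundle $\nu$; moreover $f^*(T\bC^N)$ is trivial of complex rank $N$ because $T\bC^N$ is trivial. The totally real condition $f_*(TM) \cap i(f_*(TM)) = \{0\}$ should be used to identify $f^*(T\bC^N)$, as a complex bundle, with the complexification $\bC \otimes TM$ together with a complementary complex bundle. So the first step is to set $Q = \nu$, viewed appropriately, and the goal becomes showing $(\bC \otimes TM) \oplus Q$ is trivial of rank $N$.

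The key point is the identification of $f^*(T\bC^N)$ with $(\bC \otimes TM) \oplus (\text{something})$ as complex bundles. The natural map is: the real subbundle $f_*(TM) \subset f^*(T\bC^N)$ generates, together with multiplication by $i$, a complex subbundle; the totally real hypothesis says $f_*(TM)$ and $i \cdot f_*(TM)$ meet only in zero, hence their sum $f_*(TM) \oplus i \cdot f_*(TM)$ is a complex subbundle of $f^*(T\bC^N)$ of complex rank $n = \dim M$, and it is $\bC$-linearly isomorphic to $\bC \otimes_{\bR} TM$ via $v \otimes 1 + w \otimes i \mapsto v + iw$ (using $TM \cong f_*(TM)$). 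Thus $f^*(T\bC^N) \cong (\bC \otimes TM) \oplus Q$ where $Q$ is the complex orthogonal complement of $f_*(TM) \oplus i f_*(TM)$ inside $f^*(T\bC^N)$ with respect to a Hermitian metric. Since $f^*(T\bC^N)$ is trivial of rank $N$, this gives the lemma.

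The step I expect to be the main obstacle — or at least the one requiring care — is verifying that the map $v \otimes 1 + w \otimes i \mapsto v + iw$ is a well-defined complex-linear bundle isomorphism onto $f_*(TM) + i f_*(TM)$, and in particular that it is injective precisely because of total reality. Injectivity: if $v + iw = 0$ in $f^*(T\bC^N)$ with $v, w \in f_*(TM)$, then $v = -iw \in f_*(TM) \cap i f_*(TM) = \{0\}$, so $v = w = 0$; surjectivity and the rank count follow, so the real rank of $f_*(TM) + i f_*(TM)$ is $2n$ and this subbundle is genuinely complex. After that, choosing a Hermitian metric on the trivial bundle $f^*(T\bC^N)$ and taking $Q$ to be the orthogonal complement is routine, and triviality of $(\bC \otimes TM) \oplus Q \cong f^*(\bC^N)$ is immediate.
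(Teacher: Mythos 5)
Your proof is correct and follows essentially the same approach as the paper: both arguments embed $\bC \otimes TM$ as a complex subbundle of the trivial rank-$N$ pullback bundle and take $Q$ to be a complement, with the totally real hypothesis giving fiberwise injectivity. The only cosmetic difference is that you work in $T\bC^N$ viewed as a complex bundle via $J$, whereas the paper maps into $T^{1,0}(\bC^N)$ via $\psi(v) = f_*v - iJf_*v$; these targets are isomorphic as complex bundles via $u \mapsto u - iJu$, and under that isomorphism your map is exactly the paper's $\psi$.
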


\begin{remark}\label{JL}
This condition in fact characterizes manifolds with totally real immersions into $\bC ^N$ (see \cite{JL}) and includes Theorem \ref{tri}  (see  the appendix).
\end{remark}

\begin{proof}
Let $f:M\to \bC ^N$ be a totally real immersion and define a map 
\[
\psi :\ctm \to T^{1,0}(\bC ^N)
\]
by $\psi (v)=f_*v-iJf_*v$.  It suffices to show that $\psi$ is injective on each fiber.  So let $p\in M$, $\xi$ and $\eta\in TM_p$ and assume that $\psi (\xi +i \eta )=0$.  This implies that
\[
f_*(\xi ) +Jf_*(\eta) =0.
\]
If $\eta \neq 0$, then $f_*(TM)$ would contain the complex line spanned by $f_*(\eta)$ and $Jf_*(\eta)$.  Since $f$ is a totally real immersion, we conclude instead that $\eta,$ and hence also $\xi,$ is zero.
\end{proof}

We now use Chern classes to rewrite the triviality condition of Lemma \ref{Q} in terms of cohomology classes.
Denote the total Chern class of a complex vector bundle $B$ over $M$ by
\[
c(B)=1+c_1(B)+\cdots +c_k(B)
\]
where $c_j(B) \in H^{2j}(M;\bZ)$ and 
$k=\min (\rank B, [\frac{\dim M}{2}])$.  
We use the following properties of Chern classes.  An excellent reference is \cite{MS}.

\begin{enumerate}

	\item 
	(Whitney formula)
	$c(B_1\oplus B_2)=c(B_1)\smallsmile c(B_2)$ where the right hand 	side denotes the cup  product of cohomology classes.
	
	\item If $M$ has a complex structure and dim $M=2n$, then we write
	\[
	c(M)= c(T^{1,0}(M)) = 1+c_1(M)+\cdots +c_n(M).
	\]
	So for such an $M$ 
	\[
	\begin{split}
	\ctm &=c(T^{1,0}(M)\oplus T^{0,1}(M))\\
	&=c(T^{1,0}(M))\smallsmile c(T^{0,1}(M)\\
	&= (1+c_1(M)+\cdots +c_n(M))\\
	&\ \ \smallsmile (1-c_1(M)+\cdots +(-1)^nc_n(M)).
	\end{split}
	\]
	
	\item If $B$ is trivial then $c(B)=1$.
	
\end{enumerate}

Thus if $M$ admits a \tri into $\bC ^N$ then there exists some complex vector bundle $Q$ having rank equal to $N-\dim M$ such that
\begin{equation}\label{trivial}
c(\ctm )\smallsmile c(Q)=1.
\end{equation}
So the proof of Theorem \ref{M} has been reduced to the verification that in the first two cases there is no bundle $Q$ of rank $2k-1$ and in the last two cases no bundle $Q$ of rank $2k$ satisfying \eqref{trivial}.  We present a brief proof of the following well-known result for the sake of completeness.

\begin{lemma}\label{agenerator}
Let $a$ denote the first Chern class of the canonical line bundle on 
$\cp2$.  Then
\[
c(\bC \otimes T\cp2) = 1-3a^2.
\]
\end{lemma}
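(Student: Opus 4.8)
The plan is to compute the total Chern class of $\bC\otimes T\cp2$ using property (2) from the list above, which expresses $c(\ctm)$ as the cup product $c(T^{1,0}M)\smallsmile c(T^{0,1}M)$ when $M$ carries a complex structure. For $M=\cp2$ with $n=2$, this gives
\[
c(\bC\otimes T\cp2) = \bigl(1+c_1+c_2\bigr)\smallsmile\bigl(1-c_1+c_2\bigr),
\]
where $c_j = c_j(\cp2)$. Expanding, the degree-$0$ term is $1$, the degree-$2$ term is $c_1 - c_1 = 0$, and the degree-$4$ term is $c_2 + c_2 - c_1^2 = 2c_2 - c_1^2$. (Higher-degree terms vanish since $H^k(\cp2)=0$ for $k>4$.) So the whole computation reduces to identifying $c_1(\cp2)$ and $c_2(\cp2)$ in terms of the generator $a$ of $H^2(\cp2;\bZ)$.

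First I would recall the standard Chern classes of complex projective space: $c(T^{1,0}\cp2) = (1+h)^3$ where $h$ is the first Chern class of the hyperplane (dual) bundle, so $c_1(\cp2) = 3h$ and $c_2(\cp2) = 3h^2$. Since the paper takes $a$ to be the first Chern class of the \emph{canonical} (tautological) line bundle, we have $a = -h$, hence $c_1(\cp2) = -3a$ and $c_2(\cp2) = 3a^2$. Substituting into the degree-$4$ term above yields $2c_2 - c_1^2 = 2(3a^2) - (-3a)^2 = 6a^2 - 9a^2 = -3a^2$, which gives $c(\bC\otimes T\cp2) = 1 - 3a^2$ as claimed.

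To keep the proof self-contained, rather than quoting $c(T\cp2)=(1+h)^3$ I would derive it quickly from the Euler sequence
\[
0 \to \mathcal{O} \to \mathcal{O}(1)^{\oplus 3} \to T^{1,0}\cp2 \to 0,
\]
which by the Whitney formula gives $c(T^{1,0}\cp2) = c(\mathcal{O}(1))^3 = (1+h)^3$; alternatively one can cite \cite{MS}. Either way, the derivation of $c_1$ and $c_2$ of $\cp2$ is the only input beyond the formula from property (2).

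There is essentially no hard part here: the lemma is elementary and labelled ``well-known,'' and the only place to be careful is the sign convention, namely tracking that $a$ denotes the canonical (tautological) rather than the hyperplane class, so that $c_1^2 = 9a^2$ with a plus sign after squaring the $-3a$. The rest is a two-line polynomial expansion in the truncated cohomology ring $H^*(\cp2;\bZ) = \bZ[a]/(a^3)$.
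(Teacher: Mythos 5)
Your argument is correct and takes essentially the same route as the paper: both compute $c(\bC\otimes T\cp2) = c(T^{1,0}\cp2)\smallsmile c(T^{0,1}\cp2)$ from the standard total Chern class of $\cp2$ and then truncate using $a^3=0$. The paper simply factors the product as $(1+a)^3(1-a)^3=(1-a^2)^3$ rather than expanding $(1+c_1+c_2)(1-c_1+c_2)$ termwise, and is in fact a little cavalier about the sign of $a$ versus the hyperplane class $h$ (it writes $c(\cp2)=(1+a)^3$ with $a$ the canonical class, where $(1-a)^3$ is meant), a point you track more carefully via $a=-h$ but which is harmless since the sign disappears in the product.
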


\begin{proof}
The total Chern class of the complex projective plane is given by 
\[
c(\cp2)=c(T^{1,0}(\cp2))=(1+a)^{3}.
\]
It follows that
\[
c(T^{0,1}(\cp2)) = (1-a)^{3} 
\]
and so
\[
c(\bC \otimes T\cp2)=(1-a^2)^3.
\]
The desired result follows since  $a^3=0$ for dimensional reasons.
\end{proof}

It is known that the Chern class $a$ introduced above generates $H^2(\cp2; \bZ)$.  Similarly the first Chern class of the complexification of the  canonical line bundle $\xi$ of $\rp2$, call it $b$, generates $H^2(\rp2; \bZ)$ (this cohomology group is isomorphic to $\bZ_2,$ a cyclic group of order $2$).  Indeed, the mod $2$ reduction of $b$ is the second Stiefel-Whitney class $w_2(2\xi)$ of twice the canonical line bundle, so is equal to the square of $w_1(\xi)$, which is nonzero.

Let $M$ be one of the manifolds in Theorem \ref{M}.  Let $a_1,\ldots ,a_k$ be the pull-backs of $a$ to $M$ under the corresponding projections to $\cp2$, so that $a_i^3=0$ for all $i$.  Let $b_1$ be the pull-back of $b$ to $M$ for each of the two cases in which $M$  contains a factor $\rp2$. The following result is now clear.

\begin{lemma} \label{c(ctm)}
\begin{eqnarray*}
c(\ctm ^{4k}) \;=\; c(\ctm ^{4k+1}) &=& (1-3a_1^2)\cdots (1-3a_k^2)\\
c(\ctm ^{4k+2}) \;=\; c(\ctm ^{4k+3}) &=& (1-3a_1^2)\cdots (1-3a_k^2)(1+b_1).
\end{eqnarray*}
\end{lemma}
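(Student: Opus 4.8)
The plan is to reduce the statement to the product decompositions of the four manifolds together with two elementary computations of $c(\ctm)$: one for a $\cp2$ factor (already supplied by Lemma~\ref{agenerator}) and one for an $\rp2$ factor. First I would record the external product identity: for manifolds $A$ and $B$, $\mathbf{C}\otimes T(A\times B)\cong \pi_A^*(\mathbf{C}\otimes TA)\oplus \pi_B^*(\mathbf{C}\otimes TB)$, so by naturality of Chern classes under pull-back and the Whitney formula, $c(\mathbf{C}\otimes T(A\times B))=\pi_A^*c(\mathbf{C}\otimes TA)\smallsmile \pi_B^*c(\mathbf{C}\otimes TB)$. Since $S^1$ is parallelizable, $\mathbf{C}\otimes TS^1$ is trivial and contributes a factor of $1$; this already yields the two equalities $c(\ctm^{4k})=c(\ctm^{4k+1})$ and $c(\ctm^{4k+2})=c(\ctm^{4k+3})$, so it remains only to compute $c(\ctm^{4k})$ and $c(\ctm^{4k+2})$.

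For $M^{4k}=(\cp2)^{\times k}$, I would iterate the product formula over the $k$ factors: with $a_i$ the pull-back of $a$ under the $i$-th projection, Lemma~\ref{agenerator} and naturality give $c(\ctm^{4k})=(1-3a_1^2)\cdots(1-3a_k^2)$. For the $\rp2$ factor I would use the standard isomorphism $T\rp2\oplus\varepsilon\cong 3\xi$, with $\varepsilon$ a trivial real line bundle; complexifying and applying the Whitney formula gives $c(\mathbf{C}\otimes T\rp2)=(1+b)^3$ with $b=c_1(\mathbf{C}\otimes\xi)$. Since $2b=0$ (so $3b=b$) and $b^2=0$ (as $H^4(\rp2;\bZ)=0$), the expansion $(1+b)^3$ collapses to $1+b$. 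Multiplying by the $\cp2$ contributions and pulling everything back to $M^{4k+2}$ then gives $c(\ctm^{4k+2})=(1-3a_1^2)\cdots(1-3a_k^2)(1+b_1)$, completing the verification.

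The computation is essentially routine, as the phrase ``the following result is now clear'' in the text indicates; the only small point requiring attention is keeping track of the $2$-torsion in $H^2(\rp2;\bZ)\cong\bZ_2$ when simplifying $(1+b)^3$, and being careful that the Whitney/external-product formula is applied with the correct pull-backs so that the classes $a_i$ and $b_1$ genuinely live in the cohomology of the full product. I expect no serious obstacle.
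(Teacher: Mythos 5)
Your proposal is correct and is essentially the argument the paper intends when it declares the lemma ``now clear'' after Lemma~\ref{agenerator} and the remarks about $b$: multiplicativity of $c(\ctm)$ over products, triviality of the $S^1$ factor, and the two local computations. The one step the paper leaves genuinely implicit is $c(\bC\otimes T\rp2)=1+b$, and you supply it cleanly via $T\rp2\oplus\varepsilon\cong 3\xi$ together with $2b=0$ and $b^2=0$, which is exactly the right way to fill that gap.
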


We show first that \eqref{trivial} does not have a solution $Q$ of rank less than $2k$ for $M=M^{4k}$.   
Suppose a complex vector bundle $Q$ satisfies
\[
(1-3a_1^2)\cdots (1-3a_k^2)c(Q)=1.
\]
This implies that 
$c(Q)= (1+3a_1^2)\cdots (1+3a_k^2)$
which, in turn, implies that the rank of $Q$ is at least $2k$ since 
$c_{2k}(Q) = 3^k a_1^2 \cdots a_k^2 \neq 0,$ in view of the K\"unneth formula.  The same argument applies in case $\dim M \equiv 1 \mod 4.$

We next suppose that $M = M^{4k+2}.$  Suppose a complex vector bundle $Q$ satisfies
\[
(1-3a_1^2)\cdots (1-3a_k^2)(1+b_1)c(Q)=1.
\]
This implies that
$c(Q)= (1+3a_1^2)\cdots (1+3a_k^2)(1+b_1)$
which, in turn, implies that the rank of $Q$ is at least $2k+1$ since
$c_{2k+1}(Q) = 3^k a_1^2 \cdots a_k^2 b_1 \neq 0$ in 
$H^{4k+2}(M; \bZ)\cong \bZ_2,$ where we again make use of the 
K\"unneth formula and the fact that the coefficient $3^k$ is odd. The same argument applies in case $\dim M \equiv 3 \mod 4.$

The proof of Theorem \ref{M} is now complete. 


\section{Optimality for independent functions}

Our aim is to show that for each $n>0$, if 
$N > [ \frac{n+1}{2} ]$
then some $n$-manifold $M$ admits no independent mapping of $M$ into 
$\bC^N.$  So Theorem \ref{functions} is also optimal.

Assuming that $F:M \to \bC^N$ is an independent mapping, we extend the differential to a complex linear surjection 
$dF: \bC \otimes T_p^M \to \bC^N$
for each point $p \in M,$  and so obtain a surjective bundle mapping 
$dF: \ctm \to M \times \bC^N.$
Then $K := \ker (dF)$ is a subbundle of $\ctm,$ and therefore 
\[
\ctm \cong K \oplus N\varepsilon.
\]
where $\varepsilon$ denotes a trivial complex line bundle.
It follows that $K$ and $\ctm$ have the same Chern classes.

It should come as no surprise that we will once again use the manifolds appearing in Theorem \ref{M}.

\begin{theorem}\ \label{Mbis}
\begin{itemize}
	\item 
	$M^{4k} $ does not admit an independent mapping to $\bC ^N$ for 	$N>2k$.
	
	\item 
	$M^{4k+1}=M^{4k}\times S^1 $ does not admit an independent mapping 	to $\bC ^N$ for $N > 2k+1$.
	
	\item 
	$M^{4k+2}=M^{4k}\times \rp2 $ does not admit an independent 		mapping to $\bC ^N$ for $N > 2k+1$.
	
	\item 
	$M^{4k+3}=M^{4k}\times \rp2\times S^1 $ does not admit an 			independent mapping to $\bC ^N$ for $N > 2k+2$.	
\end{itemize}
\end{theorem}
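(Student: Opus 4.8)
The plan is to mimic the argument for Theorem~\ref{M}, but now exploiting the fact that an independent mapping forces $\bC\otimes TM$ to \emph{split off} a trivial subbundle of rank $N$, rather than to become trivial after adding one. Concretely, if $F\colon M\to\bC^N$ is independent, the discussion preceding the theorem gives $\ctm\cong K\oplus N\varepsilon$ for a complex vector bundle $K$ of rank $\dim_{\bC}(\bC\otimes TM)-N=\dim M-N$, and $c(K)=c(\ctm)$ by the Whitney formula (since $c(N\varepsilon)=1$). So it suffices to show that in each of the four cases the total Chern class $c(\ctm)$ computed in Lemma~\ref{c(ctm)} \emph{cannot} be the total Chern class of a bundle of rank $\dim M-N$ once $N$ exceeds the claimed bound; equivalently, $c(\ctm)$ has a nonzero Chern-class term in degree strictly larger than $2(\dim M-N)$.

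The key computation is to locate the top nonzero term of $c(\ctm)$. For $M^{4k}=(\cp2)^{\times k}$ we have $c(\ctm^{4k})=\prod_{i=1}^k(1-3a_i^2)$, whose highest nonvanishing homogeneous piece is $(-3)^k a_1^2\cdots a_k^2\in H^{4k}(M^{4k};\bZ)$, a nonzero class since each $a_i^2\neq 0$ and the K\"unneth formula applies; this forces $\mathrm{rank}\,K\ge 2k$, i.e. $\dim M-N=4k-N\ge 2k$, so $N\le 2k$. The same total Chern class occurs for $M^{4k+1}=M^{4k}\times S^1$ (the $S^1$ factor contributes nothing to cohomology in positive degree relevant here), giving $\mathrm{rank}\,K\ge 2k$ and hence $4k+1-N\ge 2k$, i.e. $N\le 2k+1$. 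For the two cases containing an $\rp2$ factor we have $c(\ctm)=\prod_{i=1}^k(1-3a_i^2)(1+b_1)$, whose top nonzero piece is $(-3)^k a_1^2\cdots a_k^2 b_1\in H^{4k+2}(M;\bZ)\cong\bZ_2$; this is nonzero because $3^k$ is odd, so $\mathrm{rank}\,K\ge 2k+1$, giving $4k+2-N\ge 2k+1$ (hence $N\le 2k+1$) for $M^{4k+2}$ and $4k+3-N\ge 2k+1$ (hence $N\le 2k+2$) for $M^{4k+3}$. In every case this is exactly the contrapositive of the asserted statement.

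The only subtlety — and the one point that deserves care rather than being waved through — is the passage from ``$c_r(K)\neq 0$'' to ``$\mathrm{rank}_{\bC}\,K\ge r$'': this is the standard fact that the $j$-th Chern class of a complex vector bundle vanishes for $j$ greater than its complex rank, which we have in fact already built into our conventions (in the definition of the total Chern class $c(B)$, where the top index is $\min(\mathrm{rank}\,B,[\dim M/2])$). One should also note that in the $\rp2$ cases the relevant cohomology group is only $\bZ_2$, so it is essential that the coefficient $3^k$ be odd; this was already observed in the proof of Theorem~\ref{M} and carries over verbatim. With these remarks the four assertions follow immediately, so there is essentially no new obstacle beyond reorganizing the bundle-theoretic bookkeeping of the previous section.
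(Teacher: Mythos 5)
Your proposal is correct and follows essentially the same route as the paper: both use the splitting $\ctm \cong K \oplus N\varepsilon$ with $\rank K = \dim M - N$, the identity $c(K) = c(\ctm)$, and the nonvanishing of the top Chern class from Lemma~\ref{c(ctm)} (with the observation that $3^k$ is odd in the $\bZ_2$ cases) to force $\rank K \ge 2k$ or $\rank K \ge 2k+1$ and hence bound $N$. The paper writes the same estimate directly as $c_{2k}(K) \ne 0 \Rightarrow \rank K \ge 2k \Rightarrow N \le 2k$ (and analogously in the other three cases), so there is no substantive difference.
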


\begin{proof}  
For $M^{4k} = (\cp2)^k,$ we have
\[
c_{2k}(\ctm^{4k}) = (-1)^k 3^k a_1^2 \cdots a_k^2 \neq 0
\]
in the notation of Lemma \ref{c(ctm)}.  Hence for an independent mapping $F:M^{4k} \to \bC^N$ we have $c_{2k}(K) \neq 0$ which implies 
$\rank K \geq 2k$ and so $N \leq 2k.$

For $M^{4k+1}$  and an independent mapping $F:M^{4k+1} \to \bC^N$ we again have $\rank K \geq 2k$ and conclude that $N \leq 2k+1.$ 

For $M^{4k+2}$ we have 
\[
c_{2k+1}(\ctm^{4k+2}) = (-1)^k 3^k a_1^2 \cdots a_k^2 b_1 \neq 0
\]
in $H^{4k+2}(M^{4k+2}; \bZ)\cong \bZ_2,$
using the notation of Lemma \ref{c(ctm)}.  Hence for an independent mapping $F:M^{4k+2} \to \bC^N$ we have $\rank K \geq 2k+1$ and conclude that $N \leq 2k+1.$

Finally, for $M^{4k+3}$ and an independent mapping $F:M^{4k+3} \to \bC^N$ we again have $\rank K \geq 2k+1$ and conclude that 
$N \leq 2k+2,$ as desired.   
\end{proof} 
 
Note that if $M$ is a complex manifold and if $F:M\to \bC ^N$ is required to be holomorphic, then the independent maps are precisely the holomorphic submersions of $M$ into $\bC ^N$.  Compare the discussion in the final paragraph of \S 5.


\section{Orientable manifolds of even dimension}

Note that the real projective plane, and the manifolds appearing in Theorem \ref{M} having it as a factor, are not orientable.  On the other hand, every \emph{orientable} $2$-manifold admits a \tri into 
$\bC^2$ (e.g., see \cite[pages 75--76]{lecturenotes} for the case of a compact orientable $2$-manifold; the case of a connected open orientable $2$-manifold is simpler, since then the manifold is parallelizable),
which improves on Theorem \ref{tri}.  We shall generalize this by showing that each orientable closed manifold of dimension $4k+2$ admits a \tri into $\bC^{6k+2}.$  At the same time, our argument allows us to obtain an improved result for orientable $4k$-manifolds having vanishing top dual Pontryagin class (dual Pontryagin classes are defined in the final paragraph of the proof).  

\begin{theorem} \label{tri, orienteven}
Every orientable
$(4k+2)$-manifold $M$ admits a \tri into 
$\bC^{6k+2}.$  Moreover, this result is optimal.  In addition, if an orientable 
$4k$-manifold has vanishing top dual Pontryagin class then it admits a \tri into $\bC^{6k-1}.$
\end{theorem}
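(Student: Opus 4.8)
The plan is to use the bundle-theoretic characterization recalled in Remark~\ref{JL}: a manifold $M^m$ admits a \tri into $\bC^N$ precisely when there is a complex vector bundle $Q$ over $M$ with $(\bC\otimes TM)\oplus Q$ trivial of rank $N$. For the optimality half (the lower bound $6k+2$ cannot be lowered for $(4k+2)$-manifolds) I expect the author simply to invoke the orientable-free example already produced: $M^{4k}\times\rp2$ is not orientable, so we instead need an \emph{orientable} $(4k+2)$-manifold with no \tri into $\bC^{6k+1}$; the natural candidate is $M^{4k}\times\Sigma$ where $\Sigma$ is an orientable surface of positive genus, or more simply a product such as $(\cp2)^{\times k}\times T^2$, and one checks via the Whitney formula exactly as in \S2 that $c_{2k+1}$ of the complexified tangent bundle obstructs a bundle $Q$ of rank $2k$. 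So the optimality assertion is a cohomological computation wholly parallel to the proof of Theorem~\ref{M}.

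The substantive content is the existence statement. First I would reduce, as in \S2, to constructing a complex bundle $Q$ of rank $6k+2-m$ (with $m=4k+2$, so $\rank Q=2k+1$) such that $(\bC\otimes TM)\oplus Q$ is stably trivial, and then note that over an $m$-manifold a stably trivial complex bundle of rank $>m/2$ is in fact trivial, so it suffices to kill the total Chern class and then stabilize down. Since $M$ is orientable, $\bC\otimes TM \cong TM\otimes\bC$ has Chern classes $c_{2j}(\bC\otimes TM)=(-1)^j p_j(TM)$ and $c_{2j+1}(\bC\otimes TM)=0$ (up to $2$-torsion issues which vanish here because all odd classes of a complexified real bundle are $2$-torsion, and we are in an orientable setting where one can push these through). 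Thus the only potential obstructions live in degrees $4j$ and are the Pontryagin classes $p_j(M)$ for $j\le k$. The key point is then that on a closed orientable $(4k+2)$-manifold the top-degree Pontryagin-type obstruction $p_k$ sits in $H^{4k}(M)$, which is \emph{not} the top cohomology; combined with the fact that one is free to add to $Q$ an arbitrary complex bundle of small rank, one can arrange a $Q$ realizing the inverse total Pontryagin class. Concretely, I would build $Q$ rank by rank: choose $c_1(Q),\dots$ so that $c(\bC\otimes TM)\,c(Q)=1$ through degree $4k$, which is possible precisely because there is no constraint forcing a nonvanishing class in the top degree $4k+2$ — the complexified tangent bundle has vanishing odd Chern classes, so nothing obstructs in degree $4k+1$ or $4k+2$. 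For the $4k$-dimensional addendum, the same construction needs $c(\bC\otimes TM)\,c(Q)=1$ with $\rank Q = 2k-1$; now $p_k(M)\in H^{4k}(M)$ \emph{is} top degree, and the obstruction to pushing the construction through is exactly that the degree-$4k$ term be achievable with a rank-$(2k-1)$ bundle — which forces a vanishing condition, and the cleanest sufficient condition is that the top dual Pontryagin class $\bar p_k$ (the degree-$4k$ component of $c(TM\otimes\bC)^{-1}$, suitably interpreted, i.e.\ the Pontryagin class of the stable normal bundle) vanish, which is the hypothesis.

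The main obstacle, and the step I would spend the most care on, is the realization argument: knowing a cohomology class (here the inverse total Chern/Pontryagin class) is a \emph{necessary} condition is easy, but producing an actual complex bundle $Q$ of the prescribed small rank with that total Chern class requires obstruction theory. The relevant fact is that over a CW complex of dimension $m$, a complex bundle of rank $r\ge \lceil m/2\rceil$ is classified by its Chern classes together with possible higher obstructions that vanish in this range, so the bundle exists and is unique; the borderline rank is exactly $r=2k+1$ against $m=4k+2$ (so $2r=m$, on the nose) and $r=2k-1$ against $m=4k$ (so $2r=m-2<m$, comfortably below), which is why the even case $4k+2$ goes through unconditionally while the $4k$ case needs the extra hypothesis to avoid an obstruction precisely in the top degree. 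I would therefore organize the proof as: (i) reduce to stable triviality plus a rank bound; (ii) identify the obstructions as Pontryagin classes via the parity of Chern classes of a complexified bundle; (iii) invoke the obstruction-theoretic existence of a complex bundle with prescribed Chern data in the stable range; (iv) observe that in dimension $4k+2$ the would-be top obstruction lies in degree $4k<4k+2$ so is unconstrained, while in dimension $4k$ the vanishing of $\bar p_k$ is exactly what is needed; (v) finish optimality by the \S2-style Chern class computation on an orientable example such as $(\cp2)^{\times k}\times T^2$.
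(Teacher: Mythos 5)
Your high-level strategy (reduce to the bundle characterization, then find $Q$ of rank $2k+1$, resp.\ $2k-1$, with $(\bC\otimes TM)\oplus Q$ trivial, with the parity of Chern classes of a complexified bundle doing the heavy lifting) points in the right direction, and the optimality example $(\cp2)^{\times k}\times T^2$ does work just as well as the paper's $(\cp2)^{\times k}\times S^2$. But your route to the existence half diverges from the paper's and has real gaps.

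The paper does not attempt to construct $Q$ from scratch with prescribed Chern data. It starts from a bundle that is already known to exist: by Theorem~\ref{tri} there is a \tri into $\bC^{6k+3}$, hence by Lemma~\ref{Q} a $Q$ of rank $2k+1$ with $(\bC\otimes TM)\oplus Q$ trivial. The entire content of the proof is then a destabilization: $c_{2k+1}(Q)$ is the Euler class of $Q_{\bR}$, a rank-$(4k+2)$ oriented real bundle over a $(4k+2)$-manifold, and by dimension it is the \emph{sole} obstruction to a nowhere-zero section. To kill it, the paper observes that $c_{2k+1}(Q)$ is a polynomial in the $c_i(\bC\otimes TM)$ in which every monomial contains a factor with an odd index, hence $2c_{2k+1}(Q)=0$; and since $M$ is connected and orientable, $H^{4k+2}(M;\bZ)$ is $\bZ$ or $0$, so the torsion class vanishes. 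Then $Q\cong Q'\oplus\varepsilon$, and Husem\"oller's stability theorem (the trivial summand can be cancelled once the rank is at least $[\dim M/2]$) yields $(\bC\otimes TM)\oplus Q'\cong (6k+2)\varepsilon$.

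Concretely, the gaps in your proposal are: (i) You assert that ``odd Chern classes of a complexified real bundle vanish'' over an orientable manifold; in fact they are only $2$-torsion, and the reason they drop out here is the specific accident that the relevant class lands in the torsion-free top cohomology group of a connected orientable manifold. (ii) The step ``invoke the obstruction-theoretic existence of a complex bundle with prescribed Chern data in the stable range'' is not a correct fact: Chern classes do not classify complex bundles, even stably, and realizability of a prescribed total Chern class by a bundle of given small rank is exactly the kind of delicate question this proof must avoid. (iii) Even granting a bundle $Q$ with $c(\bC\otimes TM)\,c(Q)=1$, that only shows the Chern classes of $(\bC\otimes TM)\oplus Q$ are trivial, not that the bundle is trivial; the paper never needs this implication because it starts from an honest trivialization. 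The destabilization argument sidesteps all three issues and is the step you would need to supply.
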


\begin{proof}
Let $M$ be an orientable $(4k+2)$-manifold which we assume to be connected, so that $H^{4k+2}(M; \bZ) \cong \bZ$ if $M$ is compact, while this cohomology group vanishes in case $M$ is noncompact (since in the latter case $M$ has the homotopy type of a CW-complex of dimension less than $4k+2$, a well-known result for which a proof is given by Phillips \cite[Lemma 1.1]{Ph}).  By Theorem 1.1 there is a \tri of $M$ into 
$\bC^{6k+3},$ and as a consequence of Lemma 2.2  we have
\[
(\ctm) \oplus Q \,\cong\, (6k+3)\varepsilon
\]
where $\varepsilon$ denotes a trivial complex line bundle and $Q$ is a complex vector bundle of rank $2k+1.$  Let's show that 
$Q \cong Q' \oplus \varepsilon$ for a complex vector bundle $Q'$ of rank $2k.$

We know (\cite[page 158]{MS}) that $c_{2k+1}(Q)$ is equal to the Euler class $e(Q_{\bR})$ of $Q$ viewed as an oriented real vector bundle.  Moreover, this Euler class is the primary obstruction to the existence of a nowhere zero cross-section of $Q_{\bR}$ (\cite[Theorem 12.5]{MS}); in the case we are considering, it is the sole obstruction due to dimensional considerations. So our aim is to show that $c_{2k+1}(Q)$ vanishes.  Now this Chern class can be expressed as a polynomial in the Chern classes 
$c_i(\bC \otimes TM),$ and in each monomial which occurs in this polynomial some index $i$ must be odd, and therefore 
$2c_{2k+1}(Q) = 0$ because $2 c_i(\bC \otimes TM) = 0$ when $i$ is odd (\cite[page 174]{MS}).  Hence $c_{2k+1}(Q)=0$ in 
$H^{4k+2}(M; \bZ)$, since this group is either infinite cyclic or zero.

Replacing $Q$ by $Q' \oplus \varepsilon$ in the formula displayed above, we are in a range in which the trivial line bundle $\varepsilon$ can be cancelled (Husem\"oller presents the details at the start of the chapter ``Stability properties of vector bundles" in his book \textsl{Fibre Bundles} \cite{H}; see the Remark following this proof), 
so we obtain an isomorphism
\[
(\ctm) \oplus Q' \,\cong\, (6k+2)\varepsilon 
\]
which in view of Remark \ref{JL} implies the existence of a \tri of 
$M$ into $\bC^{6k+2}.$

We next point out that $(\cp2)^k \times S^2$ provides an example of an oriented manifold having dimension $4k+2$ which does not admit a \tri into $\bC^{6k+1},$ as follows immediately from the reasoning in the proof of Theorem \ref{M}.  We have therefore found optimal totally real immersions of orientable manifolds having dimensions of the form $4k+2.$

\medskip
Finally, let $M$ be an orientable
$4k$-manifold. We know that there is a \tri of $M$ into $\bC^{6k},$ hence there is a complex vector bundle $Q$ of rank $2k$ for which 
\[
(\ctm) \oplus Q \,\cong\, 6k\varepsilon.
\]
Now let $NM$ denote its normal bundle for an embedding (or immersion) into a Euclidean space, so that $TM \oplus NM$ is trivial. It follows from the Whitney formula that 
$c(Q)$ and $c(\bC \otimes NM)$ are both inverses to $c(\ctm )$ and so are equal to each other.  
By the \emph{dual Pontryagin classes} of $M$ we mean the Pontryagin classes of the normal bundle $NM$, which are equal up to sign with the even Chern classes of $\bC \otimes NM$ and so with the Chern classes $c_{2i}(Q).$  The hypothesis therefore means that the top Chern class $c_{2k}(Q)$ vanishes.  As in the first part of the proof, this implies that $Q \cong Q' \oplus \varepsilon$ for a complex vector bundle $Q'$ of rank $2k-1,$ which in turn implies the existence of a \tri of $M$ into $\bC^{6k-1}.$ 
\end{proof}

\begin{remark} \label{stability}
The result proved by Husem\"oller which was used in the previous argument is the final assertion below.  Let $X$ be an $n$-dimensional CW-complex, and let $\mathrm{Vect}_k(X)$ denote the set of isomorphism classes of $k$-dimensional complex vector bundles over $X$. One defines a map
$\mathrm{Vect}_k(X) \to \mathrm{Vect}_{k+1}(X)$ 
by forming the Whitney sum with the trivial complex line bundle over $X$.  \emph{This map is surjective if $k \geq [ \frac{n}{2}] $, and is bijective if $k \geq [ \frac{n+1}{2}] $.} See also \cite[Theorem 7.3.7]{For2}. 
\end{remark}

We now turn to the analogue of the previous theorem for the case of independent mappings, and obtain similar improvements to Theorem \ref{functions} for orientable closed manifolds of even dimension, as one might be led to anticipate from the comments at the start of this section concerning orientable $2$-manifolds.  The final assertion below is a partial converse to  \cite[Theorem 1.2]{J}.  

\begin{theorem}\label{functions, orienteven}
Every orientable 
$(4k+2)$-manifold $M$ admits an independent mapping to 
$\bC^{2k+2}.$  Moreover, this result is optimal.  In addition, if an orientable 
$4k$-manifold has vanishing top Pontryagin class then it admits an independent mapping to $\bC^{2k+1}.$
\end{theorem}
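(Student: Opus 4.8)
The plan is to mirror the argument of Theorem \ref{tri, orienteven} almost verbatim, replacing ``totally real immersion'' by ``independent mapping'' throughout and using the bundle-theoretic characterization recorded in \S 3. Recall from \S 3 that an independent mapping $F:M\to\bC^N$ produces a surjective bundle map $dF:\ctm\to M\times\bC^N$ whose kernel $K$ is a subbundle with $\ctm\cong K\oplus N\varepsilon$, so that $c(K)=c(\ctm)$; conversely, if one can write $\ctm\cong K\oplus N\varepsilon$ for some complex bundle $K$, then projecting onto the trivial summand gives an independent mapping into $\bC^N$. So the whole problem is to find, for $M$ orientable of even dimension, a complex subbundle of $\ctm$ that splits off as many trivial line bundles as possible.

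First I would handle the $(4k+2)$-dimensional case. Assume $M$ is connected (the general case follows by treating components separately, or just the relevant one). By Theorem \ref{functions} there is an independent mapping of $M$ into $\bC^{2k+1}$ — wait, more usefully, I start from a \tri of $M$ into $\bC^{6k+2}$, which exists by Theorem \ref{tri, orienteven}, so by Lemma \ref{Q} there is a complex bundle $Q$ of rank $2k$ with $\ctm\oplus Q\cong(6k+2)\varepsilon$. This already exhibits $\ctm$ as a summand of a trivial bundle, but I need to realize a trivial summand \emph{inside} $\ctm$ of rank $2k+2$. Equivalently, dualizing, I want $\ctm\cong K\oplus(2k+2)\varepsilon$ with $\operatorname{rank}K=2k$. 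The cleanest route: since $\ctm\oplus Q\cong(6k+2)\varepsilon$ and $Q$ has rank $2k=[\tfrac{\dim M}{2}]-1$, the stability result in Remark \ref{stability} (applied to $Q$, or to its complement) lets us adjust bundles by trivial line bundles in this range; one argues that $\ctm\cong \overline{Q}'\oplus(2k+2)\varepsilon$ where $\overline Q'$ is a rank-$2k$ complex bundle, provided a certain top obstruction vanishes. The obstruction that must vanish is the top Chern class $c_{2k+1}$ of the rank-$(2k+1)$ complementary bundle, or in dual terms a Chern class of $\ctm$ in degree $4k+2$ involving an odd-index factor; exactly as in the proof of Theorem \ref{tri, orienteven}, every monomial expressing it contains some $c_i(\ctm)$ with $i$ odd, so it is $2$-torsion, and since $H^{4k+2}(M;\bZ)$ is infinite cyclic (or zero) for $M$ orientable and connected, it vanishes. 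Optimality follows from the example $(\cp2)^k\times S^2$: by the computation in the proof of Theorem \ref{Mbis} (or Theorem \ref{M}), $c_{2k}(\ctm)=(-1)^k3^k a_1^2\cdots a_k^2\ne0$ in $H^{4k}$, forcing $\operatorname{rank}K\ge 2k$ for any independent mapping, hence $N\le 2k+2$; so $\bC^{2k+2}$ cannot be improved.

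Next, the $4k$-dimensional case under the hypothesis that the top Pontryagin class $p_k(M)$ vanishes. Start from a \tri of $M$ into $\bC^{6k}$, giving a rank-$2k$ complex bundle $Q$ with $\ctm\oplus Q\cong 6k\varepsilon$. Dually I want to produce a trivial summand of rank $2k+1$ inside $\ctm$, i.e. $\ctm\cong K\oplus(2k+1)\varepsilon$ with $\operatorname{rank}K=2k-1$. The obstruction to splitting off one more trivial line bundle from a rank-$2k$ complex bundle over a $4k$-complex is its top Chern class in $H^{4k}$; here the relevant bundle is $Q$ (or equivalently $\bC\otimes NM$, whose even Chern classes are $\pm$ the dual Pontryagin classes). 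But $p_k(M)=0$ means the \emph{direct} top Pontryagin class vanishes, and from $p(M)p(NM)=1$ together with $p_k(M)=0$ one gets $p_k(NM)=0$ as well (the product relation in degree $4k$ reads $p_k(M)+p_k(NM)+(\text{lower products})=0$ — one must check the cross terms also vanish or are handled; in fact the cleanest statement is that $c_{2k}(Q)=\pm p_k(NM)=\pm p_k(M)$ up to $2$-torsion, and $2$-torsion in $H^{4k}$ of an orientable connected manifold is zero). So $c_{2k}(Q)=0$, the stability result applies, and $\ctm\cong K\oplus(2k+1)\varepsilon$, yielding an independent mapping into $\bC^{2k+1}$.

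The main obstacle is the bookkeeping in the $4k$ case: one must be careful to distinguish the Pontryagin classes of $TM$ from those of $NM$, track the signs in $p_i=\pm c_{2i}(\bC\otimes(-))$, verify that the hypothesis ``$p_k(M)=0$'' really does force the relevant top Chern class of $Q$ to vanish (this uses the Whitney relation $c(Q)=c(\bC\otimes NM)$ established in the proof of Theorem \ref{tri, orienteven}, plus the fact that odd Chern classes are $2$-torsion and $H^{4k}(M;\bZ)$ has no $2$-torsion for $M$ orientable closed connected), and confirm that $2k$ and $2k-1$ lie in the stable range of Remark \ref{stability} for a $4k$-complex ($[\tfrac{4k}{2}]=2k$, so $Q$ of rank $2k$ is in the surjective range, which is exactly what is needed to strip off a line bundle). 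The $(4k+2)$ case is essentially a transcription of the already-proved Theorem \ref{tri, orienteven}; the interesting content is noticing that the same parity argument applies and that the optimality example $(\cp2)^k\times S^2$ does double duty.
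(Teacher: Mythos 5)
Your instinct in the first sentence of the $(4k+2)$ argument---start from the independent mapping into $\bC^{2k+1}$ supplied by Theorem~\ref{functions}---was the right one; the detour through a totally real immersion that you switch to (``wait, more usefully\dots'') introduces a gap. Knowing that $\ctm\oplus Q\cong(6k+2)\varepsilon$ for some rank-$2k$ bundle $Q$ is a statement about $\ctm$ being a \emph{subbundle} of a trivial bundle, while an independent mapping requires that $\ctm$ have a trivial \emph{quotient}, i.e.\ a splitting $\ctm\cong K\oplus N\varepsilon$ with the trivial part inside $\ctm$. There is no ``dualization'' taking one to the other, and the sentence ``one argues that $\ctm\cong\overline{Q}'\oplus(2k+2)\varepsilon$'' is exactly the step you never actually supply. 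The paper avoids this entirely: by Theorem~\ref{functions} there is already an independent mapping into $\bC^{2k+1}$, hence a splitting $\ctm\cong K\oplus(2k+1)\varepsilon$ with $\rank K=2k+1$; since $c(K)=c(\ctm)$, the obstruction to splitting one more trivial line off $K$ is $c_{2k+1}(\ctm)$, which has order $2$ and so vanishes in the torsion-free group $H^{4k+2}(M;\bZ)$, and Remark~\ref{stability} lets one cancel. Your torsion observation is correct, but it must be applied to the right bundle.

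The gap is more serious in the $4k$ case, because there the choice of bundle actually changes the hypothesis. You work with $Q$ (the complement from a \tri), and correctly note $c_{2k}(Q)=\pm p_k(NM)$; but you then need $p_k(NM)=0$, whereas the hypothesis of the theorem is $p_k(M)=0$. These are genuinely different conditions: the Whitney relation $p(TM)\smallsmile p(NM)=1$ (mod $2$-torsion) in degree $4k$ reads $p_k(M)+p_k(NM)+\sum_{0<i<k}p_i(M)p_{k-i}(NM)=0$, and the cross terms do not vanish in general, so you cannot pass from $p_k(M)=0$ to $p_k(NM)=0$ or conversely. (This is precisely why Theorem~\ref{tri, orienteven} is stated with the \emph{dual} Pontryagin class and Theorem~\ref{functions, orienteven} with the Pontryagin class itself.) The correct route is again via the kernel bundle: an independent mapping $M^{4k}\to\bC^{2k}$ gives $\ctm\cong K\oplus 2k\varepsilon$ with $\rank K=2k$, and since $c(K)=c(\ctm)$ one has $c_{2k}(K)=c_{2k}(\ctm)=\pm p_k(TM)$. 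Now the hypothesis $p_k(M)=0$ kills the obstruction directly, with no discussion of $NM$, no sign bookkeeping beyond ``up to sign,'' and no cross terms. Your optimality argument via $(\cp2)^k\times S^2$ and the stability-range check are both fine.
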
  

\begin{proof}
Let $M$ be an orientable $(4k+2)$-manifold which we assume to be connected, so that $H^{4k+2}(M; \bZ) \cong \bZ$ if $M$ is compact, while this cohomology group vanishes in case $M$ is noncompact.  We know there is an independent mapping of $M$ to $\bC^{2k+1},$ and that consequently we have
\[
\ctm \,\cong\, K \oplus (2k+1)\varepsilon
\]
where 
$K$ is a complex vector bundle of rank $2k+1.$  Let's show that 
$K \cong K' \oplus \varepsilon$ for a complex vector bundle $K'$ of rank $2k.$

As in the proof of Theorem \ref{tri, orienteven}, we need only 
show that $c_{2k+1}(K)$ vanishes.  Now this Chern class 
coincides with $c_{2k+1}(\ctm),$ which has order $2$ (\cite[page 174]{MS}) and so vanishes since it lies in an infinite cyclic group. 

Replacing $K$ by $K' \oplus \varepsilon$ in the formula displayed above, we obtain an isomorphism
\[
\ctm \,\cong\, K' \oplus (2k+2)\varepsilon 
\]
which in view of the analogue of Remark \ref{JL} for independent mappings (see \cite{lecturenotes}) implies the existence of an independent mapping of $M$ into $\bC^{2k+2}.$

We next point out that $(\cp2)^k \times S^2$ provides an example of an oriented manifold having dimension $4k+2$ which does not admit an independent mapping to $\bC^{2k+3},$ as follows immediately from the reasoning in the proof of Theorem \ref{Mbis}.  We have therefore found optimal totally real immersions of orientable manifolds having dimensions of the form $4k+2.$

\medskip
Finally, let $M$ be an orientable 
$4k$-manifold. We know that there is an independent mapping of $M$ to $\bC^{2k},$ hence there is a complex vector bundle $K$ of rank $2k$ for which 
\[
\ctm \,\cong\, K \oplus 2k\varepsilon.
\]
Since $K$ has the same Chern classes as $\ctm$ and the top Pontryagin class of $M$ coincides up to sign with $c_{2k}(\ctm),$ the hypothesis therefore means that the top Chern class $c_{2k}(K)$ vanishes.  As in the first part of the proof, this implies that 
$K \cong K' \oplus \varepsilon$ for a complex vector bundle $K'$ of rank $2k-1,$ which in turn implies the existence of an independent mapping of $M$ to $\bC^{2k+1},$ in view of the analogue of 
Remark \ref{JL} for independent mappings.
\end{proof}

\begin{remark} \label{4k+3}
One knows that every closed orientable $3$-manifold is parallelizable (Stiefel's theorem, e.g. see \cite[Problem 12-B]{MS}).  It is less well known that every open connected orientable $3$-manifold admits an immersion into $\bR^3$ and therefore is parallelizable, which was proved by J.~H.~C. Whitehead \cite{JHCW}.  Hence every orientable 3-manifold admits a totally real immersion into $\bC^3,$ which is also an independent mapping. 

On the other hand, Theorem \ref{M} shows that $\rp2 \times S^1$ does not admit a \tri into $\bC^3$.  From Rudin's result \cite{R} that the Klein bottle admits a totally real embedding into $\bC^2$ it follows that the product of the Klein bottle and $S^1$ is a nonorientable $3$-manifold that does admit a totally real embedding into $\bC^3$. 

The positive results for orientable $3$-manifolds suggest that  improvements to the theorems in \S 1 might be possible for a suitable class of orientable manifolds of dimension $4k+3$.  The best results obtained in this direction assert that for an \emph{open} connected orientable $(4k+3)$-manifold whose stable tangent bundle admits a complex vector bundle structure, a \tri into $\bC^{6k+3}$ and an independent mapping to $\bC^{2k+3}$ exist; in fact, it suffices that all Stiefel-Whitney classes of odd dimension vanish for the tangent bundle.  The key ingredient in the proof of the latter assertion is due to E. Thomas \cite{T} (see also \cite[Problem 15-D]{MS}), who showed that for a real vector bundle $E$ each odd Chern class $c_{2k+1}(\bC \otimes E)$ of its complexification is equal to $\beta(w_{2k}(E) w_{2k+1}(E))$, where $\beta$ denotes the Bockstein coboundary associated to the exact sequence of coefficient groups
\[
0 \to \mathbf{Z} \xrightarrow{2} \mathbf{Z} \to \mathbf{Z}_2 \to 0\,.
\]

\end{remark}


\section{Holomorphic immersions and submersions of Stein manifolds}

We start by recalling the relation of totally real immersions to holomorphic immersions of Stein manifolds.  Doing this allows us to give another proof of Theorem \ref{tri}, as follows.
Whitney showed in \cite{Wh} that any smooth $n$-dimensional manifold $M$ has a compatible real analytic structure.  In the  complexification of this structure, there is  a Stein neighborhood of $M$ \cite{Gra}.  Eliashberg and Gromov proved that any Stein manifold of dimension $n$ admits a holomorphic immersion into 
$\bC ^N$ when $N\geq [\frac {3n} 2]$; see \cite[pages 65--75]{Gro},
\cite [page 151]{For1}, or  \cite [Section 8.5]{For2}.  $M$ is totally real in its Stein neighborhood and so the restriction of a holomorphic immersion of the Stein neighborhood to $M$ is a \tri $M \to \bC^N.$

\medskip
We now show that the manifolds in Theorem \ref{M} yield new examples showing that the target dimension $[\frac {3n} 2]$ for holomorphic immersions of Stein manifolds of complex dimension $n$ is optimal. Observe that a real analytic and totally real immersion extends to a holomorphic immersion of a Stein neighborhood.

\begin{theorem} 
\label{notri}
There exists a Stein manifold of each dimension $n$ that cannot be holomorphically immersed into $\bC ^N$ if $N < [\frac{3n}{2}]$.
\end{theorem}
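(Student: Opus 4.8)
The plan is to produce, in each dimension $n$, a Stein manifold with no holomorphic immersion into $\bC^N$ for $N<[\frac{3n}{2}]$ by taking a Stein neighborhood of one of the manifolds $M^n$ appearing in Theorem \ref{M}. The strategy rests on three facts already available in the excerpt: first, Whitney's theorem that every smooth $n$-manifold carries a compatible real analytic structure; second, the existence (Grauert) of a Stein neighborhood of $M$ in the complexification of that structure, in which $M$ sits as a totally real submanifold; and third, the cohomological obstruction developed in \S2, namely that if $M$ admits a totally real immersion into $\bC^N$ then there is a complex vector bundle $Q$ over $M$ with $c(\ctm)\smallsmile c(Q)=1$ and $\rank Q=N-\dim M$, which fails for the four families $M^{4k},M^{4k+1},M^{4k+2},M^{4k+3}$ precisely when $N$ is one less than the bound of Theorem \ref{tri}.

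First I would fix $n$ and let $M=M^n$ be the corresponding manifold from Theorem \ref{M}; by Whitney's theorem give $M$ a real analytic structure, and by Grauert's theorem let $W$ be a Stein neighborhood of $M$ in the complexification, so that $M\subset W$ is totally real and $W$ is a Stein manifold of complex dimension $n$. Next I would argue by contradiction: suppose $W$ admits a holomorphic immersion $g:W\to\bC^N$ with $N<[\frac{3n}{2}]$. Restricting $g$ to $M$ and composing with the inclusion, the real differential of $g|_M$ is injective (it is the restriction of a $\bC$-linear injective map, hence injective on the real subspace $T_pM$), and moreover $g|_M$ is a totally real immersion of $M$ into $\bC^N$: indeed a holomorphic map carries the complex structure on $TW$ to the standard one on $\bC^N$, and $TM$ contains no complex line inside $TW$ since $M$ is totally real in $W$, so $g_*(TM)$ contains no complex line in $\bC^N$. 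This is essentially the observation flagged just before the theorem, that a real analytic totally real immersion extends to a holomorphic immersion of a Stein neighborhood, run in reverse.

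At this point Theorem \ref{M} gives the contradiction directly: for each of the four residue classes, $N<[\frac{3n}{2}]$ means $N$ is at most $6k-1$, $6k$, $6k+2$, or $6k+3$ respectively (with $n=4k,4k+1,4k+2,4k+3$), which are exactly the dimensions in which Theorem \ref{M} asserts $M^n$ has no totally real immersion into $\bC^N$. Hence no such holomorphic immersion $g$ exists, and $W$ is the required Stein manifold. I do not expect a genuine obstacle here, since all the hard work — both the analytic input (Whitney, Grauert) and the topological obstruction (the Chern class computation of \S2) — is already in place; the only point requiring a little care is the bookkeeping that $[\frac{3n}{2}]-1$ coincides with the exponent bound in Theorem \ref{M} across all four cases, together with the routine but essential verification that the restriction of a holomorphic immersion to a totally real submanifold is again a totally real immersion.
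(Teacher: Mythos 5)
Your proposal is correct and follows essentially the same approach as the paper: construct a Stein neighborhood $W$ of the real-analytic manifold $M^n$ from Theorem \ref{M} via Whitney and Grauert, then observe that restricting any holomorphic immersion of $W$ to the totally real submanifold $M^n$ yields a totally real immersion, contradicting Theorem \ref{M}. The paper states this as a one-line deduction; you have simply spelled out the verification that the restriction of a holomorphic immersion to a totally real submanifold is again totally real, and checked the $[\frac{3n}{2}]$ bookkeeping across the four residue classes, both of which are correct.
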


\begin{proof}
This follows immediately from Theorem \ref{M} and the observation that if $M$ does not have a \tri into $\bC ^N$, then a Stein neighborhood of $M$ in its complexification cannot have a holomorphic immersion into $\bC ^N$.
\end{proof}

Forster \cite{Fo2} (see also \cite{Fo1}) gave the first examples of Stein manifolds satisfying the conclusions of this theorem.  His examples are obtained from the Stein surface
\[
Y = \{[x:y:z] \in \cp2 : x^2+y^2+z^2 \neq 0\},
\] 
by putting  $X^n = Y^{\times m} $ for even $n=2m$, and 
$X^n = Y^{\times m} \times \bC$ for odd $n=2m+1$. 
Forster showed that $Y$ contains $\rp2$ as a deformation retract and a totally real submanifold, and went on to show that the Stein manifolds $X^n$ do not admit holomorphic immersions into $\bC ^N$ for 
$N < [\frac{3n}{2}].$  If one uses the manifolds $(\rp2)^{\times 2k}$ in place of $M^{4k} = (\cp2)^{\times k}$ in Theorem \ref{M}, the proof given there still works (and is essentially the argument due to Forster); but the results presented in \S 4 require examples that are orientable manifolds in dimensions divisible by 4, so we could not use powers of $\rp2$ in these dimensions.  

\medskip
To immerse all smooth manifolds of a given dimension, one expects the target space to be approximately twice the dimension of the manifold.  So a smooth immersion of a manifold of complex dimension $n$ into 
$\bC ^N$ should require that $N$ be roughly $2n.$  The condition of being Stein imposes topological restrictions on the manifold which are reflected in lower immersion dimensions. For example, an easy argument with Stiefel-Whitney classes shows that the Stein manifolds $X^n$ 
used by Forster and discussed briefly above do not even have smooth immersions into the corresponding targets $\bR ^{2N}$, 
when $N < [\frac{3n}{2}]$.  (This can be viewed as an instance of the Oka principle; a problem for suitable holomorphic mappings of a Stein manifold has a solution if and only if the corresponding problem for smooth mappings has a solution.)

\medskip
Finally, we recall the relation of independent mappings to holomorphic submersions of Stein manifolds. Doing this leads to another proof of Theorem 1.2, as follows. As noted at the start of this section, any smooth $n$-dimensional manifold $M$ has a compatible real analytic structure, and the complexification of this structure contains a Stein neighborhood of $M$.  Forstneri\v{c} has proved in \cite [Theorem I]{For1} (see also \cite [Section 8.12]{For2}) that every $n$-dimensional Stein manifold admits 
$[\frac {n+1} {2}]$
 holomorphic functions with pointwise independent differentials, and that this number is maximal for every $n$.  Theorem \ref{functions} follows at once, since holomorphic functions with pointwise independent differentials coincide with independent functions as defined in \S 1.   
In addition, the simple reasoning in the proof of Theorem \ref{notri} immediately yields the maximality asserted in Forstneri\v{c}'s theorem as a consequence of Theorem \ref{Mbis}.  To round out this brief discussion, observe that a holomorphic mapping $f: X \to \bC^N$ with coordinate functions $f_1, \ldots , f_N$ is a holomorphic submersion if and only if its coordinate functions are independent.


\section{Appendix}
We present three simple applications of transversality arguments.

\subsection{Totally real immersions}
We identify $\bC ^N$ with the pair $(\bRtwoN , J)$ where $J:\bRtwoN \to\bRtwoN $ is a linear isomorphism with $J^2=-\textrm{Identity}$.  Then an immersion $f:M\to \bC ^N$ is totally real if for the underlying real map 
\[
f_R :M\to \bRtwoN
\]
we have 
\begin{equation}\label{empty}
f_{R^*}(TM)\cap Jf_{R^*}(TM)=\{ 0\} .
\end{equation}
Let $\onejet$ be the one-jet bundle over $M$.  If $U\subset M$ is a coordinate patch then the restriction of $\onejet$ to $U$ can be coordinatized by
\[
(p,q,a^1,\ldots ,a^n)
\]
where $p\in U$, $q\in \bRtwoN $, $a^j\in \bRtwoN$, and $n=\dim M$.  Note that we think of $q$ as a point in $\bRtwoN$ and each $a^j$ as a column vector.  Denote the $2N	\times n$ matrix $(a^1\cdots a^n)$ by $A$.  If we write, at some point $p\in M$ and using local coordinates
\[
j^1(f)=(p,q,a^1,\ldots ,a^n)
\]
with $a^j=\frac {\partial f}{\partial x_j}$, then the condition that $f$ is an immersion is that $\rank A = n$ and the condition that $f$ is a \tri is that $\rank \, (A,JA)=2n$ where $(A,JA)$ is the $2N\times 2n$ matrix $(a^1\cdots a^n~Ja^1\cdots Ja^n)$.  

We describe a subset $\Sigma \subset \onejet$ by giving it as a subset of $\onejet |_U$ for each $U$ in a coordinate covering of $M$.  Namely
\[
\Sigma =\{(p,q,a^1,\ldots ,a^n):p\in U, q\in \bRtwoN , \rank \,(A,JA)<2n\} .
\]
$\Sigma$ is a stratified subset of $\onejet$ in the sense of \cite{EM}.
We note for later use that $\rank \, (A,JA)$ is even and that when $\rank \, (A,JA)=2n-2$ we may relabel $a^1,\ldots ,a^n$ to obtain that
\[
\{ a^1,\ldots ,a^{n-1},Ja^1,\ldots ,Ja^{n-1}\}
\]
is an independent set.

The first partial derivatives of any smooth map $f:M^n\to \bRtwoN $ determine a section $j^1(f):M\to \onejet $ and the image $j^1(f)(M)$ is a submanifold of dimension $n$.  Clearly, $f$ is a \tri if and only if
\[
j^1(f)(M)\cap \Sigma =\varnothing .
\]
By the simplest case of the Thom Transversality Theorem (see, for example, \cite [page 17]{EM}) any $f:M^n\to \bRtwoN$ (even the constant map) may be perturbed to yield a \tri provided that at a generic point of $\Sigma$ we have
\begin{equation}\label{codim}
\mbox{codim } \Sigma > n .
\end{equation}
Note that at a generic point $\rank \,(A,JA)=n-2$.  So we may assume that the vectors
\[
a^1,\ldots ,a^{n-1},Ja^1,\ldots ,Ja^{n-1}
\]
are independent.  A nearby point $(p^\prime ,q^\prime , b^1,\ldots , b^n)$ is thus in $\Sigma$ exactly when
\begin{equation}\label{b}
b^n\in \mbox{ linear span }\{ b^1,\ldots ,b^{n-1}, Jb^1,\ldots ,Jb^{n-1}\} .
\end{equation}
We complete this latter set to a basis for $\bRtwoN$ and write
\[
b=\sum_1^{n-1} (\alpha _jb^j+\beta _jJb^j)+\sum _1^{2N-2(n-1)}\gamma _ke^k .
\]
We now see that \eqref {b} gives rise to the independent conditions
\[
\gamma _1 =0,\ldots ,\gamma _{2N-2(n-1)} =0 .
\]
So the codimension of $\Sigma $ is $2N-2(n-1)$ and \eqref{codim} holds provided $N\geq [\frac {3n}{2}]$.  This proves Theorem \ref {tri}.

\subsection{Independent maps}

To study independent complex-valued maps $M^n\to \bC ^r$ we write the fibers of $J^1(M,\bC ^r)$ in local coordinates as
\[
J^1(M,\bC ^r)=\{(p,q,\alpha ^1,\ldots , \alpha ^n)\}
\]
where $p\in M$, $q\in \bC ^r$ (thought of as a point), and $\alpha ^j\in \bC ^r $ (thought of as a column vector).  For $F:M\to \bC ^r$ we write
\[
j^1(F)= (p,F(p), \frac {\partial F}{\partial x_1},\ldots , \frac {\partial F}{\partial x_n}).
\]
Previously, we wrote the conditions for $F_1, \ldots , F_n$ to be independent as $dF_1\wedge \cdots \wedge dF_r\neq 0$.  This is the same as requiring that the $r\times n$ matrix
\[
(\frac {\partial F}{\partial x_1}\cdots  \frac {\partial F}{\partial x_n})
\]
has rank $r$.

So now we define $\Sigma$ by
\begin{equation}\label{lindep}
\Sigma =\{(p,q,\alpha ^1,\ldots , \alpha ^n),\,\rank A <r\} 
\end{equation}
where $A$ is the complex $r\times n$ matrix $(\alpha ^1 \cdots  \alpha ^n)$.  More precisely, $\Sigma$ is the subset of $J^1(M,\bC )$ which has \eqref{lindep} as its local expression.  

We seek to compute the codimension of $\Sigma$.  Working at a generic point and relabeling the coordinates of $\bC ^r$ if necessary we assume
\[
\alpha ^1, \alpha ^2,\ldots , \alpha ^{r-1}
\]
are linearly independent and extend to a basis 
\[
\alpha ^1, \alpha ^2,\ldots , \alpha ^{r-1}, e ^1,
e ^2,\ldots ,e^{n-(r-1)}.
\]
For a nearby point $(p^\prime,q^\prime,\beta ^1,\beta ^2,\ldots ,\beta ^r)
$
to be in $\Sigma$ we need that in the complex linear combination
\[
\beta ^r=\sum _1^{r-1} \sigma _j\beta ^j+\sum _1^{n-(r-1)} \gamma _ke ^k
\]
each $\gamma _k$ is zero.  This gives us $2(n-r+1)$ independent real conditions and so this number is the codimension of $\Sigma$.  The condition that codim $\Sigma >n$ becomes
\[
r\leq [\frac {n+1}{2}].
\]
This proves Theorem \ref {functions}.

\subsection{About Lemma \ref{Q}}
As a third example of a transversality calculation we show that if $B$ is a complex vector bundle over a real manifold of dimension $n$ then there is a complex vector bundle $Q$ of rank $[\frac n 2 ]$ such that $B\oplus Q$ is trivial.  We will then use this to relate Lemma \ref{Q} to Theorem \ref{tri}.

\begin{lemma}
Let $B$ be a complex vector bundle of rank $r$ over a manifold $M$ of
dimension $n$. There exists a set of $[n/2]+r$ global sections of $B$ which span  the fiber of $B$ at each point of $M$.
\end{lemma}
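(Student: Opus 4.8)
The plan is to run the same Thom transversality argument as in \S6.1 and \S6.2, but now at the level of $0$-jets (that is, sections of a bundle rather than jets of maps) and with the rank-drop locus of matrices in the role of $\Sigma$. Put $N = [n/2] + r$. To give $N$ global sections $s_1,\dots,s_N$ of $B$ is the same as to give a single section $s$ of the complex vector bundle $B^{\oplus N}$, identified with the bundle $\mathrm{Hom}(M\times\bC^N, B)$ of homomorphisms from the trivial bundle $M\times\bC^N$ to $B$; and the condition that $s_1(p),\dots,s_N(p)$ span $B_p$ is precisely that the homomorphism $s(p)\colon \bC^N \to B_p$ is surjective. Over a coordinate patch $U$ on which $B$ is trivialized by a choice of frame, a section $s$ is described by an $r\times N$ matrix-valued function $A$ whose columns are the local components of $s_1,\dots,s_N$, and the ``bad'' set is
\[
\Sigma = \{(p,A) : p\in U,\ \rank A < r\},
\]
which, since the condition is invariant under the left action of $\mathrm{GL}_r(\bC)$, patches to a well-defined closed stratified subset $\Sigma\subset B^{\oplus N}$ in the sense of \cite{EM}, exactly as in the appendix above. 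A section $s$ provides the desired spanning sections if and only if $s(M)\cap\Sigma=\varnothing$.

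Next I would compute the codimension of $\Sigma$, which is again a fibrewise (hence global) matter. The determinantal variety of complex $r\times N$ matrices of rank $\leq r-1$ has complex codimension $(r-(r-1))(N-(r-1)) = N-r+1$, and the strata of still lower rank have strictly larger codimension; hence every stratum of $\Sigma$ has real codimension at least $2(N-r+1)$. With $N = [n/2]+r$ this equals $2([n/2]+1)$, which is at least $n+1$, hence strictly greater than $n=\dim M$.

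Finally, by the Thom Transversality Theorem for sections of a bundle (\cite{EM}), any section of $B^{\oplus N}$ --- say the zero section --- may be perturbed to a section $s$ transverse to every stratum of $\Sigma$; since each stratum has codimension exceeding $\dim M$, transversality forces $s(M)\cap\Sigma=\varnothing$, and such an $s$ is the required $N$-tuple of spanning sections. The step that needs the most care is exactly this transversality assertion for sections of a possibly nontrivial bundle: it follows from the usual argument (perturbations supported in trivializing charts and assembled by a partition of unity, using that $M$ is second countable), or alternatively one may reduce it to the Euclidean case already treated by embedding $B$ as a subbundle of a trivial bundle $M\times\bC^K$, realizing sections of $B$ as $P\circ g$ for maps $g\colon M\to\bC^K$ (with $P$ the orthogonal projection onto $B$) and perturbing $g$.

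To relate this back to Lemma \ref{Q}: the $N$ spanning sections just produced give a surjective bundle map $M\times\bC^N\to B$ whose kernel $Q$ is a subbundle with $B\oplus Q\cong M\times\bC^N$ and $\rank Q = N-r = [n/2]$. Taking $B = \ctm$ (so $r = n$ and $N = [3n/2]$) yields $\ctm\oplus Q$ trivial of rank $[3n/2]$, whence by Remark \ref{JL} there is a \tri of $M$ into $\bC^{[3n/2]}$, giving another proof of Theorem \ref{tri}.
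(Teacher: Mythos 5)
Your proposal is correct and takes essentially the same approach as the paper: view $[n/2]+r$ sections as a single section of $B^{\oplus N}$, show the rank-drop locus $\Sigma$ has real codimension $2(N-r+1)>n$, and apply transversality to avoid it. The one improvement is that you explicitly address the technical point of why the Thom transversality theorem applies to sections of a possibly nontrivial bundle (via partitions of unity or by embedding $B$ in a trivial bundle), a step the paper's appendix leaves implicit; you also cite the standard determinantal-variety codimension formula where the paper computes the local codimension by hand, but the count is the same.
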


\begin{proof}
Let $\rank B=r$, choose a positive integer $k$, and let
\[
{\mathcal B} = B\oplus B\oplus\cdots \oplus B = B^{\oplus k}
\]
be the direct sum of $B$ with itself  $k$ times.   Let
\[
\zeta = (\zeta _1,\ldots ,\zeta _k)
\]
denote a point in the fiber of ${\mathcal B}$ and let $\Sigma $ be the subset of $\mathcal B$ whose fiber over a point  $p\in M$ is given by 
\[
\Sigma |_p = \{ \zeta : \{ \zeta _1,\ldots , \zeta _k\} \mbox{  does not span   } B |_p\}.
\]
At a generic point of $\Sigma$, and after relabeling, $\zeta ^1,\zeta ^2,\ldots ,\zeta ^{r-1}$ and some other  section $e$ may be taken to be a basis for the fibers over a neighborhood of $p$.  For any nearby point $\zeta ^\prime$ we have the linear combinations
\[
\zeta ^{\prime}_j=\sum _{k=1}^{r-1} C_{jk}\zeta _k+\gamma _je\mbox{   for   }j=r,\ldots, k.
\]
So $\Sigma$ is locally defined by the independent complex equations $\gamma _j=0$ and therefore the codimension of $\Sigma$ is $2(k-r+1)$ and  our global spanning sections exist provided $k\geq[\frac n 2 ]+r$.
\end{proof}
Now set $a=[\frac n 2]+r$ and let $\zeta _1,\ldots , \zeta _a$ be global sections of $B$ that span the fiber at each point of $M$.  The map $M\times \bC ^{a} \to B$ given by 
\[
\Lambda (\lambda _1,\ldots ,\lambda _a)=\sum \lambda _j\zeta _j
\]
is surjective.  So we have an isomorphism of bundles
\[
B\oplus Q = M\times \bC ^{n+r}
\]
where $Q$ is the kernel of $\Lambda$.  In particular, there exists $Q$ of rank $[\frac n 2 ]$ so that
\[
(\ctm )\oplus Q
\]
is the trivial bundle of rank $[\frac {3n} 2 ]$.  Thus by the first part of Remark \ref {JL}, $M$ has a \tri into $\bC ^N, N=[\frac {3n} 2]$.  Theorem \ref {tri} then follows.


\bibliographystyle{amsplain}

\begin{thebibliography}{10}


\bibitem{EM} Y. Eliashberg and N. Mishachev, \textsl{Introduction to
the h-Principle}, American Mathematical Society, Providence, Rhode
Island, 2002.

\bibitem{Fo1} O. Forster, ``Some remarks on parallelizable Stein manifolds", Bull. Amer. Math. Soc. \textbf{73} (1967), 712--716.

\bibitem{Fo2} O. Forster, ``Plongements des vari\'{e}t\'{e}s de Stein", Comm. Math. Helv. \textbf{45} (1970), 170--184.

\bibitem{For1} F. Forstneri\v{c}, ``Noncritical holomorphic functions
on Stein manifolds", Acta Math., 191 (2003), 143--189.

\bibitem{For2} F. Forstneri\v{c}, \textsl{Stein Manifolds and Holomorphic Mappings: The Homotopy Principle in Complex Analysis}.  Ergeb. Math. Grenzgeb. (3), vol. 56. Springer-Verlag, Berlin Heidelberg, 2011.

\bibitem{Gra} H. Grauert, ``On Levi's problem and the imbedding of real-analytic manifolds",
Ann. of Math. (2) \textbf{68} (1958), 460--472.

\bibitem{Gro} M. Gromov, \textsl{Partial Differential Relations}. Ergeb. Math. Grenzgeb. (3), vol. 9. Springer-Verlag, Berlin,
1986.

\bibitem{H} D. Husem\"oller, \textsl{Fibre Bundles}, first edition, McGraw-Hill, 1966; third edition, Springer, Graduate Texts in Mathematics 20, 1994.

\bibitem{J} H. Jacobowitz, ``Maps into complex space", Proc. Amer. Math. Soc. \textbf{134} (2006), 893--895.

\bibitem{lecturenotes} H. Jacobowitz, \textsl{Convex Integration and the h-Principle}, Korea National University, Seoul, 2011.

\bibitem {JL} H. Jacobowitz and P. Landweber, ``Manifolds admitting generic immersions into $\bC ^N$".  Asian J. Math.  \textbf{11}  (2007),  no. 1, 151--165.
 
\bibitem{MS} J. Milnor and J. Stasheff, \textsl{Characteristic
classes}, Annals of Mathematics Studies, No. 76. Princeton
University Press, Princeton, New Jersey, 1974.

\bibitem{Ph} A. Phillips, ``Submersions of open manifolds", Topology \textbf{6} (1967), 170--206.

\bibitem{R} W. Rudin, ``A totally real Klein bottle in $\mathbb{C}^2\,$", Proc. Amer. Math. Soc. \textbf{82} (1981), 653--654.

\bibitem {T} E. Thomas, ``The torsion Pontryagin classes", Proc. Amer. Math. Soc. \textbf{13} (1962), 485--488.

\bibitem{JHCW} J. H. C. Whitehead, ``The immersion of an open $3$-manifold in Euclidean $3$-space", Proc. London Math. Soc. (3) \textbf{11} (1961), 81--90.

\bibitem{Wh} H. Whitney, ``Differentiable manifolds", Ann. of Math. (2) \textbf{36} (1936), 645--680.


\end{thebibliography}

\end{document}